\newtheorem{theorem}{Theorem}[section]
\newtheorem{lemma}[theorem]{Lemma}
\newtheorem{proposition}[theorem]{Proposition}
\newtheorem{corollary}[theorem]{Corollary}
\theoremstyle{definition}
\newtheorem{definition}[theorem]{Definition}
\theoremstyle{remark}
\newtheorem{remark}[theorem]{Remark}
\numberwithin{equation}{section}
\begin{document}

\title[ Stability under deformations of extremal almost-K\"ahler metrics  ]{\bf  Stability under deformations of extremal almost-K\"ahler metrics in dimension $4.$}
\author{Mehdi Lejmi}

\address{ D{\'e}partement de Math{\'e}matiques\\
UQAM\\ C.P. 8888 \\ Succ. Centre-ville \\ Montr{\'e}al (Qu{\'e}bec) \\
H3C 3P8 \\ Canada} \email{lejmi.mehdi@courrier.uqam.ca}

\maketitle
\medskip

\selectlanguage{english}
\begin{abstract}
Given a path of almost-K\"ahler metrics compatible with a fixed symplectic form on a compact $4$-manifold such that at time zero the almost-K\"ahler metric is an extremal K\"ahler one, we prove, for a short time and under a certain hypothesis, the existence of a smooth family of extremal almost-K\"ahler metrics compatible with the same symplectic form, such that at each time the induced almost-complex structure is diffeomorphic to the one induced by the path.
\end{abstract}

\section{Introduction}

An almost-K\"ahler metric on a $2n$-dimensional symplectic manifold $(M,\omega)$ is induced by an almost-complex structure $J$ compatible with $\omega$ in the sense that the tensor field $g(\cdot,\cdot)=\omega(\cdot,J\cdot)$ is symmetric and positive definite and thus it defines a Riemannian metric on $M$. The almost-K\"ahler metric is K\"ahler if the almost-complex structure $J$ is integrable. Given an almost-K\"ahler metric, one can define a canonical hermitian connection $\nabla$ (see e.g. \cite{gau1,lib}). The hermitian scalar curvature $s^\nabla$ is then obtained by taking a trace and contracting the curvature of $\nabla$ with $\omega$. In the K\"ahler case, the hermitian scalar curvature coincides with the Riemannian scalar curvature.

A key observation, made by Fujiki \cite {fuj} in the integrable case and by Donaldson \cite{don} in the general almost-K\"{a}hler case, asserts that the natural action of the infinite dimensional group $Ham(M,\omega)$ of hamiltonian symplectomorphisms on the space $AK_{\omega}$ of $\omega$-compatible almost-K\"ahler metrics is hamiltonian with moment map $\mu : AK_{\omega}\rightarrow \left(Lie(Ham(M,\omega))\right)^\ast$ given by $\mu_J(f)=\int_M s^{\nabla}f\frac{\omega^n}{n!}$. The critical points of the norm $\int_M {\left(s^{\nabla}\right)}^2\frac{\omega^n}{n!}$ are called {\it extremal almost-K\"ahler metrics}. It turns out that the symplectic gradient of $s^\nabla$ of such metrics is a holomorphic vector field in the sense that its flow preserves the corresponding almost-complex structure. In particular, extremal K\"ahler
metrics in the sense of Calabi \cite{cal} and almost-K\"ahler metrics with constant hermitian scalar curvature are extremal.
 
The GIT formal picture in \cite{don} suggests the existence and the uniqueness of an extremal almost-K\"ahler metric, modulo the action of $Ham(M,\omega)$, in each `stable complexified' orbit of the action of $Ham(M,\omega)$.
However, in this formal infinite dimensional setting, a natural complexification of $Ham(M,\omega)$ does not exist. When $H^1(M,\mathbb{R})=0$, an identification of the `complexified' orbit of a K\"ahler metric $(J,g)\in AK_\omega$ is given by considering all K\"ahler metrics $(J,\tilde{g})$ in the K\"ahler class $\left[\omega\right]$ and applying Moser's Lemma \cite{don}. In this setting, Fujiki--Schumacher \cite{fuj-sch} and LeBrun--Simanca \cite{leb-sim} showed, in the abscence of holomorphic vector fileds, that the existence of an extremal K\"ahler metric is an open condition on the space of such orbits. Moreover,
Apostolov--Calderbank--Gauduchon--Friedman \cite{apo-cal-gau-Fri 2} generalized this result by fixing a maximal torus $T$ in the reduced automorphism group of $(M,J)$ and considering $T$-invariant $\omega$-compatible K\"ahler metrics. In general, for an almost-K\"ahler metric, a description of these `complexified' orbits is not avaible, see however \cite{don1} for the toric case.
Nevertheless,  the formal picture suggests that the existence of an extremal K\"ahler metric should persist for smooth almost-K\"ahler metrics close to an extremal one.

Thus motivated, we consider in this paper the $4$-dimensional case where one can introduce a notion of almost-K\"ahler potential related to the one defined by Weinkove \cite{tos-wei-yau,wei}. In the spirit of \cite{fuj-sch,leb-sim}, we shall apply the Banach Implicit Function Theorem for the hermitian scalar curvature of $T$-invariant $\omega$-compatible almost-K\"ahler metrics where $T$ is a maximal torus in $Ham(M,\omega).$
The main technical problem is the regularity of a family of Green operators involved in the definition of the almost-K\"ahler potential. Using a Kodaira--Spencer result \cite{kod,kod-mor}, one can resolve this problem if we suppose that
the dimension of $g_t$-harmonic $J_t$-anti-invariant $2$-forms, denoted by $h^{-}_{J_t}$ (see \cite{dra-li-zha}), satisfies the condition $h^{-}_{J_t}=h^{-}_{J_0}=b^+(M)-1$ for $t\in\left(-\epsilon,\epsilon\right)$ along the path $(J_t,g_t)\in AK^T_\omega$ in the space of $T$-invariant $\omega$-compatible almost-K\"ahler metrics. So, our main theorem claims the following

\begin{theorem}\label{th1}
Let $(M,\omega)$ be a $4$-dimensional compact symplectic manifold and $T$ a maximal torus in $Ham(M,\omega)$.
Let $(J_t,g_t)$ be any smooth family of almost-K\"ahler metrics in $AK^T_\omega$ such that $(J_0,g_0)$ is an extremal {{K\"ahler}} metric. Suppose that $h^{-}_{J_t}=h^{-}_{J_0}=b^+(M)-1$ for $t\in\left(-\epsilon,\epsilon\right).$ Then, there exists a smooth family $({\tilde{J}}_t,{\tilde{g}}_t)$ of extremal almost-K\"ahler metrics
in $AK_\omega^T$, defined for sufficiently small $t$, with $({\tilde{J}}_0,{\tilde{g}}_0)=(J_0,g_0)$ and such that ${\tilde{J}}_t$ is equivariantly diffeomorphic to $J_t.$ 
\end{theorem}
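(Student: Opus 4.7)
The plan is to follow the strategy pioneered by Fujiki--Schumacher and LeBrun--Simanca, extended by Apostolov--Calderbank--Gauduchon--Friedman to the equivariant K\"ahler setting, and now to adapt it to the almost-K\"ahler situation via Weinkove's 4-dimensional almost-K\"ahler potential. The first step is to parametrize, for each $t$ near $0$, the $T$-invariant $\omega$-compatible almost-K\"ahler structures whose almost-complex structure is equivariantly diffeomorphic to $J_t$ by a single $T$-invariant real function $\varphi$. Concretely, one produces a closed $J_t$-invariant $2$-form $\omega_{t,\varphi}$ cohomologous to $\omega$ by adding to $\omega$ an explicit second-order expression in $\varphi$ and correcting its $J_t$-anti-invariant part using the $g_t$-orthogonal projection onto $J_t$-anti-invariant harmonic $2$-forms. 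A Moser-type argument then yields an equivariant diffeomorphism $\Phi_{t,\varphi}$ with $\Phi_{t,\varphi}^{*}\omega_{t,\varphi}=\omega$, hence an $\omega$-compatible almost-K\"ahler structure $(J_{t,\varphi},g_{t,\varphi})$ with $J_{t,\varphi}$ equivariantly diffeomorphic to $J_t$.

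The second step is to check that the Green operators and projections entering this construction depend smoothly on $t$. This is precisely where the hypothesis $h^{-}_{J_t}=b^{+}(M)-1$ is used: by the Kodaira--Spencer theorem for smooth families of elliptic operators whose kernel has locally constant dimension, constancy of $h^{-}_{J_t}$ forces smooth variation of the harmonic projections, and hence smoothness of the potential map $\varphi\mapsto(J_{t,\varphi},g_{t,\varphi})$ in suitable $T$-equivariant H\"older spaces.

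The third step is to set up the implicit function equation. Let $\xi=(\xi_{1},\ldots,\xi_{N})$ be a basis of Killing potentials for $T$ normalized to have zero mean, and let $c$ and $\lambda_{0}$ be fixed so that $s^{\nabla}$ of $(J_{0},g_{0})$ equals $c+\langle\lambda_{0},\xi\rangle$. Define
\[
F\colon(-\epsilon,\epsilon)\times\mathcal{U}\times\mathbb{R}^{N}\longrightarrow C^{k,\alpha}_{T}(M,\mathbb{R}),\qquad F(t,\varphi,\lambda)=s^{\nabla}_{t,\varphi}-\bigl(c+\langle\lambda,\xi\rangle\bigr),
\]
so that zeros of $F$ correspond to extremal metrics in the family of potentials. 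The partial derivative with respect to $(\varphi,\lambda)$ at $(0,0,\lambda_{0})$ is the sum of the Lichnerowicz operator of $(J_{0},g_{0})$, restricted to $T$-invariant functions, with the projection onto the space of Killing potentials for $T$. Because $(J_{0},g_{0})$ is extremal K\"ahler and $T$ is maximal in $\mathrm{Ham}(M,\omega)$, this linearization is an isomorphism between the appropriate Banach spaces, and the Banach Implicit Function Theorem yields a smooth family $(\varphi(t),\lambda(t))$ solving $F=0$ for sufficiently small $t$. Standard elliptic bootstrapping for the fourth-order extremal equation then upgrades $\varphi(t)$ to a smooth family in $t$, so that $(\tilde{J}_{t},\tilde{g}_{t}):=(J_{t,\varphi(t)},g_{t,\varphi(t)})$ is the claimed family.

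The main obstacle will be step two, the smooth dependence of the potential construction on $t$: in the integrable case one appeals to the $\partial\bar\partial$-lemma, a tool that is simply absent here, and in the genuinely almost-K\"ahler regime the dimension of $J_t$-anti-invariant harmonic $2$-forms can in principle jump, which would destroy the regularity of the Green operators and make the potential map non-smooth. The hypothesis $h^{-}_{J_t}=b^{+}(M)-1$ is exactly what rules this out and feeds into Kodaira--Spencer. A secondary technical point, needed for the conclusion that $\tilde{J}_{t}$ is equivariantly diffeomorphic to $J_{t}$, is to arrange every ingredient (the potential, the Moser diffeomorphism, the Banach spaces) $T$-equivariantly, so that the cokernel of the linearization at $t=0$ indeed agrees with the space of Killing potentials that $\lambda$ parametrizes.
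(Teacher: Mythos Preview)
Your proposal is correct and follows essentially the same strategy as the paper: a LeBrun--Simanca style implicit function argument using an almost-K\"ahler potential in dimension $4$, with the hypothesis $h^{-}_{J_t}=b^+(M)-1$ feeding into Kodaira--Spencer to guarantee $C^1$ dependence of the Green operators, the Lichnerowicz operator at $t=0$ giving invertibility of the linearization by maximality of $T$, and elliptic bootstrapping plus the equivariant Moser lemma to finish. The only packaging differences are that the paper (i) uses its own almost-K\"ahler potential $\omega_{t,f}=\omega+d(J_tdf-\delta^{g_t}\psi^t_f)$ obtained by inverting the operator $P(\psi)=(d\delta^g\psi)^{J,-}$ on $\Omega^{J,-}$, which is closely related to but not identical with Weinkove's, (ii) handles the cokernel via the double projection $(Id-\Pi^T_\omega)\circ(Id-\Pi^T_{\omega_{t,f}})$ rather than your auxiliary parameter $\lambda\in\mathbb{R}^N$, (iii) works in Sobolev rather than H\"older spaces, and (iv) applies Moser at the very end rather than inside the parametrization; none of these affect the substance of the argument.
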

\begin{remark}
(i) The condition that $h^{-}_{J_t}=h^{-}_{J_0}=b^+(M)-1$ for $t\in\left(-\epsilon,\epsilon\right)$ is satisified in the following cases:

\begin{enumerate}
\item When $J_t$ are integrable almost-complex structures for each $t$. Then, $h^{-}_{J_t}=2h^{2,0}(M,J_t)=b^+(M)-1$ by a well-known result of Kodaira \cite{bar-pet-van}. On the other hand, it is unknown
whether or not, for an $\omega$-compatible {\it non-integrable} almost-complex $J$ on a compact $4$-dimensional symplectic manifold $M$ with $b^+(M)\ge 3$, the equality $h^{-}_{J}=b^+(M)-1$ is possible (see \cite{dra-li-zha}).
\item When $b^+(M)=1$, $h^{-}_{J_t}=0$ for each $t$. This condition is satisfied when $(M,\omega)$ admits a non trivial torus in $Ham(M,\omega)$ \cite{kar}.

\end{enumerate}

\noindent (ii) Theorem \ref{th1} holds under the weaker assumption that the torus $T\subset Ham(M,\omega)$ is maximal in $Ham(M,\omega)\cap Isom_0(M,g_0)$, where $ Isom_0(M,g_0)$ denotes the connected component of the isometry group of the initial metric $g_0.$ By a known result of Calabi \cite{cal2}, any extremal K\"ahler metric is invariant under a maximal connected compact subgroup of $Ham(M,\omega)\cap \widetilde{Aut}(M,J_0)$, where $ \widetilde{Aut}(M,J_0)$ is the reduced automorphism group of $(M,J_0)$. Hence, Theorem \ref{th1} generalizes \cite{fuj-sch,leb-sim} in the $4$-dimensional case.

\noindent (iii) It was kindly pointed out to us by T. Dr\u{a}ghici that using a recent result of Donaldson and Remarks (i) and (ii) above, one can further extend Theorem \ref{th1} in the case when $b^+(M)=1$ as follows: Let
$(M,\omega_0,J_0,g_0)$ be a compact $4$-dimensional extremal K\"ahler manifold with $b^+(M)=1$ and $T$ be a maximal torus in ${Ham}(M,\omega)\cap {{Isom}_0}(M,g_0)$.
Then, for any smooth family of $T$-invariant almost-complex structures $J(t)$ with $J(t)=J_0$, $J(t)$ is compatible with an extremal almost-K\"ahler metric $g_t$ for $t\in\left(-\epsilon,\epsilon\right)$. Indeed, as $J(t)$ are tamed by $\omega_0$ for $t\in\left(-\epsilon,\epsilon\right)$
and $b^+(M)=1$, one can use the openess result of Donaldson \cite[Proposition 1]{don2} (see also \cite[Sec. 5]{dra-li-zha}) to show that there exists a smooth family of $J(t)$-invariant symplectic forms $\omega_t$ with $\left[\omega_t\right]=\left[\omega_0\right]$.
Averaging $\omega_t$ over the compact group $T$ and using the equivariant Moser Lemma, we obtain a family ${{J}}_t$ of $T$-invariant $\omega_0$-compatible almost-complex structures such that ${{J}}_t$ is $T$-equivariantly diffeomorphic to $J(t)$.
We can then apply Theorem \ref{th1} to produce compatible extremal metrics
\end{remark}

Kim and Sung \cite{kim-sun} showed that, in any dimension, if one starts with a K\"ahler metric of constant scalar curvature with no holomorphic vector fields, one can construct infinite dimensional families of
almost-K\"ahler metrics of constant hermitian scalar curvature which concide with the initial metric away from an open set. Similar existence result was presented in \cite{lej} when the initial K\"ahler metric is locally toric.
\section{Preliminaries}

Let $(M,\omega)$ be a compact symplectic manifold of dimension $2n$. An almost-complex structure $J$ is compatible with $\omega$ if the tensor field $g(\cdot,\cdot):= \omega(\cdot,J\cdot)$ defines a Riemannian metric on
$M$; then, $(J,g)$ is called an ($\omega$-compatible) {\it almost-K\"ahler} metric on $(M,\omega)$. If, additionally, the almost-complex structure $J$ is {\it{integrable}}, then $(J,g)$ is a {\it K\"ahler} metric on $(M,\omega).$

The almost-complex structure $J$ acts on the cotangent bundle $T^\ast(M)$ by $J\alpha(X)=-\alpha(JX),$ where $\alpha$ is a $1$-form and $X$ a vector field on $M$. Any section $\psi$ of the bundle $\otimes^2T^\ast(M)$ admits an orthogonal splitting $\psi=\psi^{J,+}+\psi^{J,-}$, where $\psi^{J,+}$ is the $J$-invariant part and $\psi^{J,-}$ is the $J$-anti-invariant part,
given by
\begin{equation*}
\psi^{J,+}(\cdot,\cdot)=\frac{1}{2}\left(\psi(\cdot,\cdot)+\psi(J\cdot,J\cdot)\right){\text{ and }} \psi^{J,-}(\cdot,\cdot)=\frac{1}{2}\left(\psi(\cdot,\cdot)-\psi(J\cdot,J\cdot)\right).
\end{equation*}
In particular, the bundle of $2$-forms decomposes under the action of $J$
\begin{equation}\label{split1}
\Lambda^2(M)=\mathbb{R}\,.\,\omega\oplus\Lambda_0^{J,+}(M)\oplus\Lambda^{J,-}(M),
\end{equation}
where $\Lambda_0^{J,+}(M)$ is the subbundle of the {\it primitive} $J$-invariant $2$-forms (i.e. $2$-forms pointwise orthogonal to $\omega$) and $\Lambda^{J,-}(M)$ is the subbundle of $J$-anti-invariant $2$-forms. Hence, the subbundle of primitive $2$-forms $\Lambda_0^2(M)$ admits the splitting
\begin{equation*}
\Lambda^2_0(M)=\Lambda_0^{J,+}(M)\oplus\Lambda^{J,-}(M).
\end{equation*}

For an $\omega$-compatible almost-K\"ahler metric $(J,g)$, the canonical hermitian connection on the complex tangent bundle $(T(M),J,g)$ is defined by
\begin{equation*}
\nabla_XY=D^g_XY-\frac{1}{2}J\left(D^g_XJ\right)Y,
\end{equation*}
where $D^g$ is the Levi-Civita connection with respect to $g$ and $X,Y$ are vector fields on $M$. Denote by $R^{\nabla}$ the curvature of $\nabla$. Then, the {\it hermitian Ricci form} $\rho^{\nabla}$ is the trace of $R^{\nabla}_{X,Y}$ viewed as an anti-hermitian linear operator of $(T(M),J,g)$, i.e. 
\begin{equation*}
\rho^{\nabla}(X,Y)=-tr(J\circ R^{\nabla}_{X,Y}). 
\end{equation*}
Hence, the $2$-form $\rho^{\nabla}$ is a closed (real) $2$-form and it is a deRham representative of $2\pi c_1(T(M),J)$ in $H^2(M,\mathbb{R})$, where $ c_1(T(M),J)$ is the first (real) Chern class. If the almost-complex structure $J$ is compatible with a symplectic form $\tilde{\omega}$ such that ${\tilde{\omega}}^n=e^{F}{\omega^n}$ for some smooth real-valued function $F$ on $M$, then \cite{tos-wei,tos-wei-yau}
\begin{equation}\label{equa5}
{\tilde{\rho}}^\nabla=-\frac{1}{2}dJdF+{{\rho}}^\nabla,
\end{equation}
where ${\tilde{\rho}}^\nabla$ is the hermitian Ricci form of the almost-K\"ahler metric $(J,\tilde{g})$ (here $\tilde{g}(\cdot,\cdot)=\tilde{\omega}(\cdot,J\cdot)$ is the induced Riemannian metric).

We define the {\it hermitian scalar curvature} $s^{\nabla}$ of an almost-K\"{a}hler metric $(J,g)$ as  the trace of $\rho^\nabla$ with respect to $\omega$, i.e.
\begin{equation}\label{equa6}
s^{\nabla}\omega^n=2n\left(\rho^{\nabla}\wedge\omega^{n-1}\right).
\end{equation}
%

The (Riemannian) {\it Hodge operator} $\ast_g:\Lambda^p(M)\rightarrow\Lambda^{2n-p}(M)$ is defined to be the unique isomorphism such that $\psi_1\wedge\left(\ast_g\,\psi_2\right)=g(\psi_1,\psi_2)\,\frac{\omega^n}{n!}$, for any $p$-forms $\psi_1,\psi_2.$
Then, the codifferential $\delta^g$, defined as the formal adjoint of the exterior derivative $d$ with respect to $g$, is related to $d$ by the relation \cite{bes,gau} 
\begin{equation*}
\delta^g=-\ast_{g}d\,\ast_g.
\end{equation*}
 It follows that
\begin{equation}\label{equa3}
d=\ast_g\,\delta^g\,\ast_g.
\end{equation}

In dimension $2n=4$, the bundle of $2$-forms decomposes as
\begin{equation*}
\Lambda^2(M)=\Lambda^+(M)\oplus\Lambda^-(M),
\end{equation*}
where $\Lambda^{\pm}(M)$ correspond to the eigenvalue $(\pm1)$ under the action of the Hodge operator $\ast_g$. This decomposition is related to the splitting (\ref{split1}) as follows
\begin{equation}\label{split2}
\Lambda^+(M)=\mathbb{R}\,.\,\omega\oplus\Lambda^{J,-}(M) {\text{ and }}\Lambda^-(M)=\Lambda_0^{J,+}(M).
\end{equation}

\section{Extremal almost-K\"ahler metrics}\label{sec}
Let $(M,\omega)$ be a compact and connected symplectic manifold of dimension $2n$. Any $\omega$-compatible almost-complex structure is identified with the induced Riemannian metric.

Denote by $AK_{\omega}$ the Fr\'echet space of $\omega$-compatible almost-complex structures.
The space $AK_{\omega}$ comes naturally equipped with a formal K\"{a}hler structure. Let $Ham(M,\omega)$ be the group of hamiltonian symplectomorphisms of $(M^{2n},\omega)$. The Lie algebra of $Ham(M,\omega)$ is identified with the space of smooth functions on $M$ with zero mean value.

A key observation, made by Fujiki \cite {fuj} in the integrable case and by Donaldson \cite{don} in the general almost-K\"{a}hler case, asserts that the natural action of $Ham(M,\omega)$ on $AK_{\omega}$ is hamiltonian with momentum given by the hermitian scalar curvature. More precisely, the moment map $\mu : AK_{\omega}\rightarrow \left(Lie(Ham(M,\omega))\right)^\ast$ is
\begin{equation*}
\mu_J(f)=\int_M s^{\nabla}f\frac{\omega^n}{n!}
\end{equation*}
where $s^{\nabla}$ is the hermitian scalar curvature of $(J,g)$ and $f$ is a smooth function with zero mean value viewed as an element of $Lie(Ham(M,\omega))$. The square-norm of the hermitian
scalar curvature defines a functional on $AK_{\omega}$ 
\begin{equation}\label{equa4}
J\mapsto\int_M {\left(s^{\nabla}\right)}^2\frac{\omega^n}{n!}.
\end{equation}
\begin{definition}
The critical points $(J,g)$ of the functional (\ref{equa4}) are called {\it extremal almost-K\"{a}hler metrics}. 
\end{definition}
\begin{proposition}\label{prop7}
An almost-K\"ahler metric $(J,g)$ is a critical point of (\ref{equa4}) if and only if $grad_\omega s^{\nabla}$ is a Killing vector field with respect to $g$.
\end{proposition}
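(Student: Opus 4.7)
The plan is to exploit the moment map structure of the functional, which is the squared norm of the moment map $\mu$ for the $Ham(M,\omega)$-action. The proof will follow the standard finite-dimensional heuristic: critical points of $\|\mu\|^2$ are precisely those points at which the vector field generated by $\mu$ itself (viewed as a Lie algebra element) vanishes on the manifold. I shall make this rigorous in the almost-K\"ahler setting.

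First, I would pick a smooth path $J_t\in AK_\omega$ with $J_0=J$ and $\dot J_0=\dot J$, and compute
\begin{equation*}
\frac{d}{dt}\Big|_{t=0}\int_M (s^{\nabla_t})^2\,\frac{\omega^n}{n!} = 2\int_M s^\nabla\,\dot s^\nabla\,\frac{\omega^n}{n!}.
\end{equation*}
The next and central step is to apply the moment map identity. Writing $\Omega$ for the formal K\"ahler form on $AK_\omega$, the defining property $d\mu_J(f)(\dot J)=\Omega_J(\xi_f,\dot J)$ of the moment map, together with the explicit form $\mu_J(f)=\int_M s^\nabla f\,\omega^n/n!$, gives
\begin{equation*}
\int_M \dot s^\nabla\,f\,\frac{\omega^n}{n!} = \Omega_J(\xi_f,\dot J),
\end{equation*}
for every smooth $f$ of zero mean, where $\xi_f=\mathcal{L}_{X_f}J$ is the fundamental vector field at $J$ generated by $f$ (with $X_f=grad_\omega f$ the hamiltonian vector field). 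Taking $f = s^\nabla - \bar s^\nabla$, which does not change $X_f$, we conclude
\begin{equation*}
\frac{d}{dt}\Big|_{t=0}\int_M (s^{\nabla_t})^2\,\frac{\omega^n}{n!} = 2\,\Omega_J\bigl(\mathcal{L}_{X_{s^\nabla}}J,\,\dot J\bigr).
\end{equation*}

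Since $\dot J$ ranges over the whole tangent space $T_J AK_\omega$ and $\Omega$ is (formally) non-degenerate on that space, vanishing of the derivative for every admissible $\dot J$ is equivalent to $\mathcal{L}_{X_{s^\nabla}}J=0$, i.e.\ the flow of $X_{s^\nabla}=grad_\omega s^\nabla$ preserves $J$. As a symplectic gradient, $X_{s^\nabla}$ automatically preserves $\omega$, so preserving $J$ as well is equivalent to preserving $g(\cdot,\cdot)=\omega(\cdot,J\cdot)$, i.e.\ to $X_{s^\nabla}$ being a Killing field for $g$. This gives both directions of the proposition.

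The only delicate step is the rigorous justification of the moment map identity in the almost-K\"ahler (non-integrable) setting; this is exactly Donaldson's extension \cite{don} of Fujiki's computation, which I would invoke rather than redo. A minor point is to verify that the pairing of $\dot s^\nabla$ against the constant mean $\bar s^\nabla$ contributes nothing to the derivative, which follows from the fact that $\int_M s^\nabla \omega^n/n!$ is the topological constant $4\pi\,c_1\cdot[\omega]^{n-1}/(n-1)!$ and hence constant along $J_t$, so $\int_M \dot s^\nabla\,\omega^n/n!=0$.
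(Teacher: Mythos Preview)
Your argument is correct and is precisely the standard moment-map derivation of the Euler--Lagrange equation for $\|\mu\|^2$. Note, however, that the paper does not supply its own proof of this proposition: it simply refers the reader to \cite{apo-dra,gau,lej}. Since those references establish the result via the same Fujiki--Donaldson moment-map framework that you invoke (and that the paper recalls just before the statement), your proof is in substance the argument being cited, so there is no genuine methodological difference to compare.

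One small remark on presentation: when you appeal to non-degeneracy of $\Omega$ to conclude $\mathcal{L}_{X_{s^\nabla}}J=0$, it is worth making explicit that $\mathcal{L}_{X_{s^\nabla}}J$ is itself an element of $T_J AK_\omega$ (as a fundamental vector field of the $Ham(M,\omega)$-action), so that the pairing is against all tangent vectors and the conclusion follows. Everything else, including the handling of the mean $\bar s^\nabla$ via the topological constancy of $\int_M s^\nabla\,\omega^n/n!$, is fine.
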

A proof of Proposition \ref{prop7} is given in \cite{apo-dra,gau,lej}.

\subsection{The extremal vector field}

We fix a maximal torus $T$ in $Ham(M,\omega)$ and denote by $\mathfrak{t}_{\omega}$ the finite dimensional
space of real-valued smooth functions on $M$ which are hamiltonians with zero mean value of elements of
$\mathfrak{t}=Lie(T).$ 
Denote by $\Pi_{\omega}^T$ the $L^2$-orthogonal
projection of $T$-invariant smooth functions on $\mathfrak{t}_{\omega}$ with
respect to the volume form $\frac{\omega^n}{n!}$. Let $AK_{\omega}^T$ be the space of $\omega$-compatible $T$-invariant almost-complex structures.
Given any $J\in AK_{\omega}^T$, we define $z^T_{\omega}:=\Pi_{\omega}^T s^{\nabla},$ where $s^{\nabla}$ is the hermitian scalar curvature of $(J,g)$. Then, we have the following (for more details see \cite{apo-cal-gau-Fri 2,gau,lej})

\begin{proposition}\label{prop1}
The potential $z^T_{\omega}$ is independant of $(J,g).$ Furthermore, a $\omega$-compatible $T$-invariant almost-K\"ahler metric $(J,g)$ is extremal if and only if
\begin{equation*}
\mathring{s}^\nabla=z^T_{\omega},
\end{equation*}
where $\mathring{s}^\nabla$ is the integral zero part of the hermitian scalar curvature $s^\nabla$ of $(J,g).$
\end{proposition}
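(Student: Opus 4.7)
\smallskip

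\noindent\textbf{Proof proposal.} The plan is to handle the two assertions separately, using in each case that $\mathfrak{t}_\omega$ consists of hamiltonians of $T$-invariant vector fields and that $T$ is maximal in $Ham(M,\omega)$.

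For the claim that $z^T_\omega$ does not depend on $(J,g)$, I would test it against an arbitrary $f\in\mathfrak{t}_\omega$. By definition of the $L^2$-projection and of the moment map,
\begin{equation*}
\int_M z^T_\omega\,f\,\tfrac{\omega^n}{n!}=\int_M s^\nabla f\,\tfrac{\omega^n}{n!}=\mu_J(f).
\end{equation*}
Given two elements $J_0,J_1\in AK_\omega^T$, join them by a smooth path $J_t\in AK_\omega^T$ (the space is contractible, by averaging over $T$ a convex combination on $AK_\omega$). The moment map identity on the formally K\"ahler manifold $AK_\omega$ gives
\begin{equation*}
\tfrac{d}{dt}\mu_{J_t}(f)=\Omega_{AK}\bigl(\dot J_t,\,X_f|_{J_t}\bigr),
\end{equation*}
where $X_f$ is the fundamental vector field on $AK_\omega$ generated by $f\in Lie(Ham(M,\omega))$. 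At a $T$-invariant $J_t$, however, $X_f|_{J_t}=\mathcal{L}_{X_f}J_t=0$ because the hamiltonian flow of $f$ lies in $T$ and hence preserves $J_t$. Consequently $\mu_{J_t}(f)$ is constant along the path, so the pairing of $z^T_\omega$ with every $f\in\mathfrak{t}_\omega$ is $J$-independent, and therefore so is $z^T_\omega\in\mathfrak{t}_\omega$ itself.

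For the characterization of extremal metrics, I would use Proposition \ref{prop7}. Suppose first that $(J,g)$ is extremal, so $grad_\omega s^\nabla$ is Killing. Since $J$ (hence $s^\nabla$) is $T$-invariant, this vector field commutes with the $T$-action, is simultaneously hamiltonian and isometric, and together with $T$ generates a connected abelian subgroup of $Ham(M,\omega)$ whose closure is a torus containing $T$. Maximality of $T$ forces $grad_\omega s^\nabla\in\mathfrak{t}$, i.e.\ $\mathring s^\nabla\in\mathfrak{t}_\omega$; then $\Pi_\omega^T\mathring s^\nabla=\mathring s^\nabla$, and since $\Pi_\omega^T$ annihilates constants this gives $\mathring s^\nabla=\Pi_\omega^T s^\nabla=z_\omega^T$. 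Conversely, if $\mathring s^\nabla=z^T_\omega\in\mathfrak{t}_\omega$, then $\mathring s^\nabla$ is the hamiltonian of an element of $\mathfrak{t}$; since $g$ is $T$-invariant, that element acts by isometries, so $grad_\omega s^\nabla=grad_\omega\mathring s^\nabla$ is Killing and $(J,g)$ is extremal by Proposition \ref{prop7}.

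The only delicate step is the maximality argument in the forward direction of the second part: one must know that a Killing hamiltonian vector field commuting with a maximal torus of $Ham(M,\omega)$ actually lies in $\mathfrak{t}$. This follows by taking the closure in the compact group of $g$-isometries lying in $Ham(M,\omega)$ and invoking the standard fact that a connected abelian subgroup containing a maximal torus coincides with it; all other steps are essentially formal consequences of the moment map picture recalled in the preceding subsection.
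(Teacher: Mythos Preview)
Your argument is correct and is essentially the standard one. Note, however, that the paper does \emph{not} supply its own proof of this proposition: it merely cites \cite{apo-cal-gau-Fri 2,gau,lej} and, in the subsequent remark, indicates that the independence of $z^T_\omega$ follows from the arguments in \cite{lej} (without maximality), while maximality of $T$ is needed only for the extremality criterion. Your proof matches exactly this division of labour: the first part uses only $T$-invariance of $J_t$ via the moment-map identity $\tfrac{d}{dt}\mu_{J_t}(f)=\Omega_{AK}(X_f|_{J_t},\dot J_t)$ together with $X_f|_{J_t}=\mathcal{L}_{grad_\omega f}J_t=0$, and the second part invokes maximality precisely as in \cite[Lemma~4]{apo-cal-gau-Fri 2} (which the paper also cites later in the proof of Theorem~\ref{th1}). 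So there is no discrepancy to report; your write-up simply spells out what the paper leaves to the references.
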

\begin{definition}\label{def1}
The vector field $Z^T_{\omega}:=grad_{\omega}z^T_{\omega}$ is called the {\it extremal vector field} relative to $T$. 
\end{definition}
\begin{proposition}\label{prop2}
The vector field $Z^T_{\omega}$ is invariant under $T$-invariant isotopy of $\omega.$
\end{proposition}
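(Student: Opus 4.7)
The plan is to produce a smooth family of $T$-equivariant diffeomorphisms intertwining the various $\omega_t$ with $\omega_0$ and to use naturality of all the ingredients entering the definition of $Z^T_\omega$ to conclude that the extremal vector field does not change along the isotopy.

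First I would fix a smooth $T$-invariant isotopy $\omega_t$ with $\omega_0=\omega$ and $[\omega_t]=[\omega]$, and invoke the equivariant Moser lemma to obtain a smooth family of $T$-equivariant diffeomorphisms $\phi_t:M\to M$ with $\phi_0=\mathrm{id}$ and $\phi_t^*\omega_t=\omega$. Because $\phi_t$ is $T$-equivariant, every $X\in\mathfrak{t}$ satisfies $(\phi_t)_*X=X$. Applying $\phi_t^*$ to the defining identity $\iota_X\omega_t=-df_{X,t}$ for the normalized $\omega_t$-hamiltonian of $X$ yields $\iota_X\omega=-d(\phi_t^*f_{X,t})$, so $\phi_t^*f_{X,t}$ and $f_{X,0}$ differ by a constant. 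The constant is zero, since $\phi_t^*(\omega_t^n/n!)=\omega^n/n!$ so $\phi_t^*$ preserves the normalization. This gives a natural linear identification $\phi_t^*\mathfrak{t}_{\omega_t}=\mathfrak{t}_\omega$.

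Next, I would pick any $(J,g)\in AK^T_\omega$ and define $(J_t,g_t):=((\phi_t^{-1})^*J,(\phi_t^{-1})^*g)\in AK^T_{\omega_t}$. Diffeomorphism naturality of the canonical hermitian connection gives $\phi_t^* s^{\nabla_{(J_t,g_t)}}=s^{\nabla_{(J,g)}}$. Since $\phi_t^*:L^2(M,\omega_t^n/n!)\to L^2(M,\omega^n/n!)$ is an isometry sending $T$-invariant functions to $T$-invariant functions and $\mathfrak{t}_{\omega_t}$ isomorphically onto $\mathfrak{t}_\omega$, the projections intertwine: $\phi_t^*\circ\Pi_{\omega_t}^T=\Pi_\omega^T\circ\phi_t^*$ on $T$-invariant functions. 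Applying this to $s^{\nabla_{(J_t,g_t)}}$ and using Proposition \ref{prop1}, which guarantees that $z^T_{\omega_t}$ and $z^T_\omega$ can be computed from $(J_t,g_t)$ and $(J,g)$ respectively, yields
\begin{equation*}
\phi_t^* z^T_{\omega_t}=\Pi_\omega^T\phi_t^* s^{\nabla_{(J_t,g_t)}}=\Pi_\omega^T s^{\nabla_{(J,g)}}=z^T_\omega.
\end{equation*}

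Finally, since $z^T_{\omega_t}\in\mathfrak{t}_{\omega_t}$, the vector field $Z^T_{\omega_t}=\mathrm{grad}_{\omega_t}z^T_{\omega_t}$ belongs to $\mathfrak{t}$. Pulling back $\iota_{Z^T_{\omega_t}}\omega_t=dz^T_{\omega_t}$ via $\phi_t$ gives $\iota_{\phi_t^*Z^T_{\omega_t}}\omega=dz^T_\omega=\iota_{Z^T_\omega}\omega$, hence $\phi_t^*Z^T_{\omega_t}=Z^T_\omega$. Because $\phi_t$ is $T$-equivariant and both vector fields lie in $\mathfrak{t}$, we have $\phi_t^*Z^T_{\omega_t}=Z^T_{\omega_t}$, so $Z^T_{\omega_t}=Z^T_\omega$ for all $t$. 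The only mildly delicate step is the equivariant version of Moser's argument, which produces $\phi_t$ as the flow of the $T$-invariant vector field whose contraction with $\omega_t$ solves $\iota_{X_t}\omega_t=-\beta_t$ for a $T$-invariant primitive $\beta_t$ of $\dot\omega_t$; such a $\beta_t$ exists by averaging over $T$ any primitive obtained from the standard Moser construction.
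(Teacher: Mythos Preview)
Your argument is correct. The paper itself does not supply a proof of this proposition; it refers the reader to \cite{apo-cal-gau-Fri 2,gau,lej} for details, so there is no in-text argument to compare against line by line.

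Your route via the equivariant Moser lemma is one of the two standard ways to establish this kind of invariance statement. The key observation you exploit---that a $T$-equivariant diffeomorphism fixes every vector in $\mathfrak t$, so once you know $\phi_t^\ast Z^T_{\omega_t}=Z^T_\omega$ and $Z^T_{\omega_t}\in\mathfrak t$ you are done---is exactly right, and your handling of the equivariant primitive $\beta_t$ by averaging is the correct justification for the existence of the $T$-equivariant Moser flow. The alternative approach, which appears in some of the cited references, is to differentiate $z^T_{\omega_t}$ directly in $t$ and show, using the moment-map interpretation of $s^\nabla$ and an integration-by-parts identity, that the derivative of the associated vector field vanishes; that method avoids pulling back by diffeomorphisms but requires a more explicit variational computation. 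Both arguments yield the same conclusion and are of comparable difficulty; yours has the advantage of making the naturality of the construction completely transparent.
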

\begin{remark}
The assumption that $T\subset Ham(M,\omega)$ is a maximal torus is used only in the second part of Proposition \ref{prop1}. Indeed, the arguments in \cite{lej} show that $z^T_{\omega}=\Pi_{\omega}^T s^{\nabla}$ is independent of $(J,g)$
for any torus $T\subset Ham(M,\omega)$ and Proposition \ref{prop2} still holds true for the corresponding vector field $Z^T_{\omega}=grad_{\omega}z^T_{\omega}$.

\end{remark}

\section{Almost-K\"ahler potentials in dimension 4}

Let $(M,\omega)$ be a compact symplectic manifold of dimension $2n=4$ and $(J,g)$ a $\omega$-compatible almost-K\"ahler metric. In order to define the almost-K\"ahler potentials, we consider the following second order linear differential operator \cite{lej1} on the smooth sections $\Omega^{J,-}(M)$ of the bundle of $J$-anti-invariant $2$-forms.

\begin{equation*}
\begin{array}{lrcl}
P : & \Omega^{J,-}(M) & \longrightarrow & \Omega^{J,-}(M) \\
    & \psi & \longmapsto & (d\delta^g\psi)^{J,-},
\end{array}
\end{equation*}
where $\delta^g$ is the codifferential with respect to the metric $g.$
\begin{lemma}
$P$ is a self-adjoint strongly elliptic linear operator with kernel the $g$-harmonic $J$-anti-invariant $2$-forms.
\end{lemma}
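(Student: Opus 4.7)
The plan is to establish three things in turn---self-adjointness, strong ellipticity, and the identification of the kernel---all pivoting on the dimension-$4$ observation from (\ref{split2}), namely that $\Lambda^{J,-}(M)\subset\Lambda^+(M)$, so every $J$-anti-invariant $2$-form is automatically self-dual.

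For self-adjointness, the point is that when paired against an element of $\Omega^{J,-}(M)$ the projection $(\cdot)^{J,-}$ is transparent, so for any $\psi_1,\psi_2\in\Omega^{J,-}(M)$,
\begin{equation*}
\langle P\psi_1,\psi_2\rangle_{L^2}=\langle (d\delta^g\psi_1)^{J,-},\psi_2\rangle_{L^2}=\langle d\delta^g\psi_1,\psi_2\rangle_{L^2}=\langle \delta^g\psi_1,\delta^g\psi_2\rangle_{L^2}.
\end{equation*}
Symmetry in $\psi_1,\psi_2$ gives formal self-adjointness, and setting $\psi_1=\psi_2=\psi$ yields the useful identity $\langle P\psi,\psi\rangle=\|\delta^g\psi\|_{L^2}^2\geq 0$.

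For strong ellipticity I would compute the principal symbol at a nonzero $\xi\in T_x^\ast M$. Since the principal symbols of $d:\Omega^1\to\Omega^2$ and $\delta^g:\Omega^2\to\Omega^1$ are formally adjoint, the same algebraic manipulation at the symbol level gives $\langle\sigma(P)(\xi)\psi,\psi\rangle=|\iota_{\xi^\#}\psi|^2$ for $\psi\in\Lambda^{J,-}_x$. In a $g$-orthonormal adapted frame $\{e^1,e^2=Je^1,e^3,e^4=Je^3\}$, the fibre $\Lambda^{J,-}_x$ is spanned by $e^1\wedge e^3-e^2\wedge e^4$ and $e^1\wedge e^4+e^2\wedge e^3$; a direct check (in which the mixed term cancels) produces $|\iota_{\xi^\#}\psi|^2=\tfrac{1}{2}|\xi|^2|\psi|^2$. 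Since $\sigma(P)(\xi)$ is symmetric on $\Lambda^{J,-}_x$, polarization then gives $\sigma(P)(\xi)=\tfrac{1}{2}|\xi|^2\operatorname{Id}_{\Lambda^{J,-}_x}$, which is positive definite for $\xi\neq 0$.

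For the kernel, $P\psi=0$ combined with $\langle P\psi,\psi\rangle=\|\delta^g\psi\|_{L^2}^2$ forces $\delta^g\psi=0$; self-duality of $\psi$ turns (\ref{equa3}) into $\delta^g\psi=-\ast_g d\psi$, so $\delta^g\psi=0$ is equivalent to $d\psi=0$, whence $\psi$ is $g$-harmonic. The converse is immediate from the definition of $P$. The only real difficulty throughout is that the projection $(\cdot)^{J,-}$ in the definition of $P$ could in principle spoil positivity of the symbol or the clean identification of the kernel---but the inclusion $\Lambda^{J,-}\subset\Lambda^+$ coming from (\ref{split2}) is precisely what makes the projection harmless at each step.
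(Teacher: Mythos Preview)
Your proof is correct and follows essentially the same route as the paper's: both use that the projection $(\cdot)^{J,-}$ is transparent when paired against $J$-anti-invariant forms to reduce $\langle P\psi,\psi\rangle$ to $\|\delta^g\psi\|^2$, and both use the self-duality $\ast_g\psi=\psi$ from (\ref{split2}) together with (\ref{equa3}) to pass from $\delta^g\psi=0$ to $d\psi=0$. The only differences are cosmetic: you separate out self-adjointness explicitly and give an adapted-frame computation for the symbol, whereas the paper simply records $\sigma(P)_\xi(\psi)=-\tfrac12|\xi|^2\psi$ (the sign discrepancy with your $+\tfrac12|\xi|^2$ is a convention in defining the symbol and does not affect strong ellipticity).
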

\begin{proof}
The principal symbol of $P$ is given by the linear map $\sigma(P)_\xi(\psi)=-\frac{1}{2}|\xi|^2\psi,$ $\forall\xi\in T^\ast_x(M),\psi\in\Omega^{J,-}(M)$. So, $P$ is a self-adjoint elliptic linear operator with respect to the global inner product $\left<\cdot,\cdot\right>=\int_Mg(\cdot,\cdot)\,\frac{\omega^2}{2}.$ Now, let $\psi\in \Omega^{J,-}(M)  $ and suppose that $P(\psi)=0$. Then, $0=\left< (d\delta^g\psi)^{J,-},\psi\right>=\left< d\delta^g\psi,\psi\right>=\left<\delta^g\psi,\delta^g\psi\right>$ which means that $\delta^g\psi=0.$ It follows from (\ref{split2}) and since $\psi$ is $J$-anti-invariant that $\ast_g\psi=\psi$. Using the relation (\ref{equa3}), we obtain $d\psi=\ast_g\delta^g\ast_g\psi=\ast_g\delta^g\psi=0$. Hence, $d\psi=\delta^g\psi=0$ and thus $\psi$ is a $g$-harmonic $J$-anti-invariant $2$-form.
\end{proof}
\begin{corollary}\label{cor1}
For $f\in C^\infty(M,\mathbb{R})$, there exist a unique $\psi_f\in \Omega^{J,-}(M)$ orthogonal to the kernel of $P$ such that $(d\delta^g\psi_f)^{J,-}=(dJdf)^{J,-}$.
\end{corollary}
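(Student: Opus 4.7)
The plan is to apply standard elliptic theory using the preceding lemma. Since $P$ is self-adjoint and strongly elliptic on sections of $\Lambda^{J,-}(M)$, Fredholm theory (after passing to suitable Sobolev completions) provides an $L^2$-orthogonal decomposition $\Omega^{J,-}(M)=\ker P\oplus\mathrm{Im}(P)$. Hence the equation $P\psi=\eta$ admits a unique solution $\psi$ orthogonal to $\ker P$ precisely when $\eta$ lies in $(\ker P)^\perp$. The corollary will therefore follow once I verify that $\eta:=(dJdf)^{J,-}$ is $L^2$-orthogonal to every $g$-harmonic $J$-anti-invariant $2$-form; uniqueness of $\psi_f$ is then automatic, and its smoothness is guaranteed by elliptic regularity applied to the self-adjoint elliptic operator $P$.

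For the orthogonality check, let $\phi\in\ker P$. By the preceding lemma $\phi$ is $g$-harmonic and $J$-anti-invariant, so in particular $\delta^g\phi=0$. Since the $J$-invariant and $J$-anti-invariant components of any $2$-form are pointwise $g$-orthogonal, only the $J$-anti-invariant part of $dJdf$ pairs non-trivially with $\phi$, giving
$$\left\langle (dJdf)^{J,-},\phi\right\rangle=\left\langle dJdf,\phi\right\rangle=\left\langle Jdf,\delta^g\phi\right\rangle=0,$$
where the second equality uses that $\delta^g$ is the formal adjoint of $d$ with respect to the global inner product on $M$. This is the required orthogonality.

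I do not anticipate any serious obstacle: the substantive work (strong ellipticity of $P$ and the identification of $\ker P$ with the space of $g$-harmonic $J$-anti-invariant $2$-forms) has already been carried out in the preceding lemma. The only minor subtlety is that one first solves the equation in a Sobolev completion via Hilbert-space Fredholm theory and then promotes the weak solution to a smooth one by elliptic regularity.
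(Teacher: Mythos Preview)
Your proposal is correct and follows essentially the same approach as the paper: both verify that $(dJdf)^{J,-}$ is $L^2$-orthogonal to $\ker P$ via the identical computation $\langle (dJdf)^{J,-},\phi\rangle=\langle dJdf,\phi\rangle=\langle Jdf,\delta^g\phi\rangle=0$, and then invoke standard elliptic theory for the self-adjoint operator $P$ to obtain existence and uniqueness of $\psi_f$. Your write-up is slightly more explicit about the Sobolev/regularity machinery, but the argument is the same.
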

\begin{proof}
For a smooth real-valued function $f\in C^\infty(M,\mathbb{R})$ and any $\alpha$ in the kernel of $P$, we have $\left<(dJdf)^{J,-},\alpha\right>=\left<dJdf,\alpha\right>=\left<Jdf,\delta^g\alpha\right>=0.$ By a standard result of elliptic theory \cite{bes, wel} and since $P$ is self-adjoint, there exist a smooth section $\psi_f\in\Omega^{J,-}(M)$ such that $P(\psi_f)=(dJdf)^{J,-}$. Moreover, $\psi_f$ is unique if one requires $\psi_f$ be orthogonal to the kernel of $P$.

\end{proof}

From Corollary \ref{cor1}, it follows that, for $f\in C^\infty(M,\mathbb{R})$, the symplectic form $\omega_f=\omega+d(Jdf-\delta^{g}\psi_f)$ is a $J$-invariant closed $2$-form. Then, the function $f$ is called an {\it almost-K\"ahler potential} if the induced symmetric
tensor $g_f(\cdot,\cdot):=\omega_f(\cdot,J\cdot)$ is a Riemannian metric. This notion of almost-K\"ahler potential is closely related but different (in general) from the one defined by Weinkove in \cite{wei}. More precisely, if the almost-complex structure $J$ is compatible with a symplectic form $\tilde{\omega}$ which is cohomologous to $\omega$ i.e. $\tilde{\omega}-\omega=d{\alpha}$ (for some $1$-form ${\alpha}$), then the almost-K\"ahler potential defined by Weinkove is given by the function $\tilde{f}$ which is uniquely determined (up to the addition of constant) by the Hodge decomposition of ${\alpha}$ with respect to the (self-adjoint elliptic) {\it twisted Laplace operator} $\tilde{\Delta}^{{c}}=J\Delta^{\tilde{g}}J^{-1},$ where $\Delta^{\tilde{g}}$ is the (Riemannian) Laplace operator with respect to the induced metric $\tilde{g}(\cdot,\cdot)=\tilde{\omega}(\cdot,J\cdot).$ In other words, we have the decomposition ${\alpha}={\alpha}_{H^c}+\tilde{\Delta}^{{c}}\tilde{\mathbb{G}}\,{\alpha}$, where $\tilde{\mathbb{G}}$ is the {\it Green operator} associated to $\tilde{\Delta}^{{c}}$ and ${\alpha}_{H^c}$ is the harmonic part of ${\alpha}$ with respect to $\tilde{\Delta}^{{c}}$. Thus, $\tilde{f}=-\delta^{\tilde{g}}J\,\tilde{\mathbb{G}}\alpha,$ where $\delta^{\tilde{g}}$ is the codifferential with respect to the metric $\tilde{g}.$

Note that $(dJdf)^{J,-}=D^g_{{(df)}^{\sharp_g}}\omega$ (see e.g. \cite{gau}), where $\sharp_g$ stands for the isomorphism
between $T^\ast(M)$ and $T(M)$ induced by $g^{-1}$. Hence, in the K\"ahler case, $(dJdf)^{J,-}=0$ which implies that $\psi_f=0$ and thus this almost-K\"ahler potential coincides with the usual K\"ahler one.

\section{Main Theorem}
Let $(M,\omega)$ be a compact and connected symplectic manifold of dimension $2n=4$ and $J_t\in AK_\omega$ be a smooth path of $\omega$-compatible almost-complex structures. We define the following family of differential operators associated to $J_t$
\begin{equation*}
\begin{array}{lrcl}
P_t : & \Omega_0^2(M) & \longrightarrow & \Omega_0^2(M) \\
    & \psi & \longmapsto & \frac{1}{2}\Delta^{g_t} \psi-\frac{1}{4}g_t(\Delta^{g_t}\psi,\omega)\omega,
\end{array}
\end{equation*}
where $\Omega_0^2(M)$ is the space of smooth sections of the bundle $\Lambda_0^2(M)$ of primitive $2$-forms (pointwise orthogonal to $\omega$) and $\Delta^{g_t}$ is the (Riemannian) Laplacian with respect to the metric $g_t(\cdot,\cdot)=\omega(\cdot,J_t\cdot)$ (here we use the convention $g_t(\omega,\omega)=2$).

One can easily check that $P_t$ preserves the decomposition 
\begin{equation*}
\Omega^2_0(M)=\Omega_0^{J,+}(M)\oplus\Omega^{J,-}(M).
\end{equation*}
Furthermore,
\begin{equation*}
P_t|_{ \Omega^{J_t,-}(M)}(\psi)=(d\delta^{g_t}\psi)^{J_t,-} {\text{  and  }}P_t|_{ \Omega_0^{J_t,+}(M)}(\psi)= \frac{1}{2}\Delta^{g_t} \psi.
\end{equation*}
It follows that the kernel of $P_t$ consists of primitive harmonic $2$-forms which splits as {\it anti-selfdual}
and $J_t$-anti-invariant ones so we have
\begin{equation*}
{\text{dim ker}}(P_t)=b^-(M)+h^{-}_{J_t},
\end{equation*}
where $h^{-}_{J_t}$ is introduced by Dr\u{a}ghici--Li--Zhang in \cite{dra-li-zha}. 

Moreover, $P_t- \frac{1}{2}\Delta^{g_t}$ is a linear differential operator of order $1$. Indeed, a direct computation shows that
\begin{eqnarray*}
\left(P_t- \frac{1}{2}\Delta^{g_t}\right)(\psi)&=&\frac{1}{2}\left[\frac{1}{2}\delta^{g_t}\left(D^{g_t}\omega(\psi)\right)-\frac{1}{2}g_t(D^{g_t}\psi,D^{g_t}\omega)\right.\\
&+&\left.\frac{s_{g_t}}{6}g_t(\omega,\psi)-W^{g_t}(\omega,\psi)\right]\omega,
\end{eqnarray*}
where $W^{g_t}$ stands for the {\it Weyl tensor} (see e.g. \cite{bes}), $D^{g_t}$ (resp. $\delta^{g_t}$) for the Levi-Civita connection (resp. the codifferential) with respect to the metric $g_t$ and $s_{g_t}$ for the Riemannian scalar curvature defined as the trace of the (Riemannian) tensor.

The operator $P_t$ is a self-adjoint strongly elliptic linear operator of order 2. We obtain then a family of Green operators ${\mathbb{G}}_t$ associated to $P_t$. 
If $h^{-}_{J_t}=h^{-}_{J_0}=b^+(M)-1$ for $t\in\left(-\epsilon,\epsilon\right)$, then ${\mathbb{G}}_t $ is $C^\infty$ {\it differentiable} in $t\in\left(-\epsilon,\epsilon\right)$ \cite{kod,kod-mor}, meaning that ${\mathbb{G}}_t(\psi_t)$ is a smooth family of sections of $\Lambda^2_0(M)$
for any smooth sections $\psi_t.$


To show Theorem \ref{th1}, we consider the extension of ${\mathbb{G}}_t $ to the Sobolev spaces $W^{k,p}(M, \Lambda_0^2(M))$ involving derivatives up to $k$.
\begin{lemma}\label{lem5}
Let ${\mathbb{G}}_t : \Omega_0^2(M) \rightarrow\Omega_0^2(M)$ the family of the above Green operators associated to $P_t$ and suppose that $h^{-}_{J_t}=h^{-}_{J_0}=b^+(M)-1$ for $t\in\left(-\epsilon,\epsilon\right)$. Then, the extension of ${\mathbb{G}}_t$ to Sobolev spaces, still denoted by ${\mathbb{G}}_t$, defines a $C^1$ map ${\mathbb{G}}:  (-\epsilon,\epsilon)\times  {W}^{p,k}(M, \Lambda_0^2(M))\rightarrow {W}^{p,k+2}(M, \Lambda_0^2(M))$ 
\end{lemma}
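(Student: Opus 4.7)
The plan is to reduce the lemma to showing that $t \mapsto \mathbb{G}_t$ is a $C^1$ curve in the Banach space $\mathcal{L}\bigl(W^{k,p}(M,\Lambda_0^2(M)),\,W^{k+2,p}(M,\Lambda_0^2(M))\bigr)$ of bounded operators. Since $\mathbb{G}(t,\psi)=\mathbb{G}_t(\psi)$ is linear in $\psi$ for each fixed $t$, the Fr\'echet derivative in $\psi$ is $\mathbb{G}_t$ itself. Thus continuity of $t\mapsto \mathbb{G}_t$ and $t\mapsto \partial_t\mathbb{G}_t$ in the operator norm will give the joint $C^1$ statement.

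First I would set up the ingredients. Under the hypothesis $h^{-}_{J_t}=h^{-}_{J_0}=b^+(M)-1$, the dimension $\dim\ker P_t = b^-(M)+h^{-}_{J_t}$ is constant in $t$. Since $P_t$ is a smooth family of self-adjoint strongly elliptic operators of order $2$ on a compact manifold, the Kodaira--Spencer--Morrow results \cite{kod,kod-mor} provide that both the $L^2$-projector $H_t$ onto $\ker P_t$ and the Green operator $\mathbb{G}_t$ depend smoothly on $t$ as maps of smooth sections. In particular, one can choose an $L^2$-orthonormal basis of $\ker P_t$ depending smoothly on $t$, so $H_t$ is a smoothing operator with kernel smooth jointly in $(t,x,y)$. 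Standard elliptic regularity for the strongly elliptic operator $P_t$ then extends $\mathbb{G}_t$ to a bounded operator $W^{k,p}\to W^{k+2,p}$ with operator-norm bound locally uniform in $t$.

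Second, I would exploit the identities
\begin{equation*}
\mathbb{G}_t P_t = P_t \mathbb{G}_t = I - H_t, \qquad H_t \mathbb{G}_t = \mathbb{G}_t H_t = 0,
\end{equation*}
valid on the orthogonal complement of $\ker P_t$, to derive a ``second resolvent'' type formula
\begin{equation*}
\mathbb{G}_t - \mathbb{G}_s = -\mathbb{G}_t(P_t-P_s)\mathbb{G}_s - \mathbb{G}_t(H_s-H_t) - (H_t-H_s)\mathbb{G}_s.
\end{equation*}
Because $P_t-P_s$ is a differential operator of order $\le 2$ with smooth coefficients depending smoothly on $t$, and $H_t-H_s$ is a smoothing operator of arbitrarily low order going to $0$ in operator norm as $s\to t$, this identity, combined with the uniform elliptic estimate for $\mathbb{G}_t$, shows that $t\mapsto \mathbb{G}_t$ is continuous into $\mathcal{L}(W^{k,p},W^{k+2,p})$. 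Dividing by $t-s$ and passing to the limit formally yields
\begin{equation*}
\partial_t \mathbb{G}_t = -\mathbb{G}_t\,\dot P_t\,\mathbb{G}_t - \mathbb{G}_t\,\dot H_t - \dot H_t\,\mathbb{G}_t,
\end{equation*}
where $\dot P_t = \partial_t P_t$ is a second-order differential operator with smooth coefficients and $\dot H_t$ is again smoothing. Each factor on the right is a bounded operator between the appropriate Sobolev spaces, continuous in $t$ in operator norm, so the limit exists in $\mathcal{L}(W^{k,p},W^{k+2,p})$ and depends continuously on $t$. This establishes $C^1$-dependence.

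The main obstacle is the step of passing from the qualitative Kodaira--Spencer smoothness (stated for $C^\infty$ sections) to quantitative operator-norm continuity of $\mathbb{G}_t$ on Sobolev completions. This is where the explicit algebraic formula is essential: it converts $t$-derivatives of $\mathbb{G}_t$ into compositions in which the only pieces that ``move'' in $t$ (namely $\dot P_t$ and $\dot H_t$) are either a fixed-order differential operator or a smoothing operator, both of which are manifestly bounded on every Sobolev scale. To close the argument rigorously, one verifies that the difference quotient $(\mathbb{G}_{t+h}-\mathbb{G}_t)/h$ converges to the claimed limit in $\mathcal{L}(W^{k,p},W^{k+2,p})$ by applying the displayed resolvent identity to an arbitrary $\psi\in W^{k,p}$, using the local uniform bound on $\|\mathbb{G}_{t+h}\|$ and density of smooth sections to justify the passage to the limit. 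This finishes the proof.
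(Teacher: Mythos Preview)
Your argument is correct (up to a harmless sign slip: the middle term in your resolvent identity should read $-\mathbb{G}_t(H_t-H_s)$ rather than $-\mathbb{G}_t(H_s-H_t)$; your derivative formula is nonetheless right), and it follows a genuinely different route from the paper's own proof.

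The paper does not use a resolvent identity. Instead, it fixes the projector $\Pi_0$ onto $\ker P_0$ and applies the Banach inverse function theorem to the map
\[
\Phi(t,\psi)=\bigl(t,(Id-\Pi_0)P_t\psi\bigr)
\]
on $(-\epsilon,\epsilon)\times W^{k+2,p}(M,\Lambda_0^2(M))^{\perp}$, where ${}^{\perp}$ denotes the $L^2_{g_0}$-orthogonal complement of $\ker P_0$. Since $d\Phi_{(0,\psi)}$ is an isomorphism, $\Phi$ has a local $C^1$ inverse, and one checks the algebraic identity
\[
(Id-\Pi_0)P_t(Id-\Pi_0)\mathbb{G}_t(Id-\Pi_0)=(Id-\Pi_t)(Id-\Pi_0)-P_t(\Pi_0\mathbb{G}_t)(Id-\Pi_0),
\]
whose right-hand side is $C^1$ because $\Pi_t$ is $C^1$ (Kodaira--Spencer, constant kernel dimension) and because $\Pi_0\mathbb{G}_t$ and $\mathbb{G}_t\Pi_0$ are finite-rank operators built from $\mathbb{G}_t$ applied to the \emph{fixed smooth} sections $\psi_0^i\in\ker P_0$, so their $C^1$-dependence follows directly from the Kodaira--Spencer statement for smooth data. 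Applying $\Phi^{-1}$ then yields that $(Id-\Pi_0)\mathbb{G}_t(Id-\Pi_0)$, and hence $\mathbb{G}_t$, is $C^1$.

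Your approach is more in the spirit of analytic perturbation theory: once the uniform elliptic estimate gives a locally uniform bound on $\|\mathbb{G}_t\|_{W^{k,p}\to W^{k+2,p}}$, the resolvent identity immediately converts smoothness of the \emph{coefficients} of $P_t$ and of the \emph{finite-rank kernel} of $H_t$ into operator-norm differentiability of $\mathbb{G}_t$, with an explicit formula for $\partial_t\mathbb{G}_t$. This is arguably more transparent and avoids the inverse function theorem entirely. The paper's argument, by contrast, isolates the passage from ``$C^\infty$ Kodaira--Spencer'' to ``Sobolev $C^1$'' in the finite-rank corrections $\Pi_0\mathbb{G}_t$, $\mathbb{G}_t\Pi_0$ and lets the IFT handle the infinite-dimensional core; this keeps the appeal to uniform elliptic estimates implicit in the IFT step rather than stated up front.
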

\begin{proof}
Denote by $\Pi_t$ the $L^2$-orthogonal projection to the kernel of $P_t$ with respect to $\left<\cdot,\cdot\right>_{L^2_{g_t}}=\int_Mg_t(\cdot,\cdot)\frac{\omega^2}{2}$. 
We claim that ${\mathbb{G}}_t\circ \Pi_0$ and $\Pi_0 \circ {\mathbb{G}}: (-\epsilon, \epsilon) \times W^{k,p}(M, \Lambda_0^2(M)) \rightarrow W^{k+2,p}(M,\Lambda_0^2(M))$ are $C^1$ maps. Indeed, let $\{\psi^i_{0}\}$ be an orthonormal basis of the kernel of $P_0$
with respect to $\left<\cdot,\cdot\right>_{L^2_{g_0}}$.
Note that $\psi^i_{0}$ are smooth since $P_0$ is elliptic. Then, we have 
\begin{eqnarray*}
\left({\mathbb{G}}_t \circ \Pi_0\right) (\psi) &=& \sum_i \left<\psi, \psi^i_{0}\right>_{L^2_{g_0}} {\mathbb{G}}_t(\psi^i_{0}),\\
\left(\Pi_0 \circ {\mathbb{G}}_t\right)(\psi)&=& \sum_i  \left<{\mathbb{G}}_t(\psi), (\psi^i_{0})^{J_0,+}+(\psi^i_{0})^{J_0,-}\right>_{L^2_{g_0}}\psi^i_{0}\\
&=& \sum_i \left( \int_M-{{\mathbb{G}}_t(\psi)\wedge(\psi^i_{0})^{J_0,+}}+{{\mathbb{G}}_t(\psi)\wedge(\psi^i_{0})^{J_0,-}}\right)\psi^i_{0}\\
&=& \sum_i  \left(\int_M-{\mathbb{G}}_t(\psi)\wedge\left((\psi^i_{0})^{J_0,+}\right)^{J_t,+}-{\mathbb{G}}_t(\psi)\wedge\left((\psi^i_{0})^{J_0,+}\right)^{J_t,-}\right.\\
&+&\left. {\mathbb{G}}_t(\psi)\wedge\left((\psi^i_{0})^{J_0,-}\right)^{J_t,+}+{\mathbb{G}}_t(\psi)\wedge\left((\psi^i_{0})^{J_0,-}\right)^{J_t,-}\right) \psi^i_{0}\\
&=& \sum_i  \left[\left<\psi,{\mathbb{G}}_t \left(\left((\psi^i_{0})^{J_0,+}\right)^{J_t,+}\right)\right>_{L^2_{g_t}} -\left<\psi,{\mathbb{G}}_t \left(\left((\psi^i_{0})^{J_0,+}\right)^{J_t,-}\right)\right>_{L^2_{g_t}}\right.\\
&-&\left. \left<\psi,{\mathbb{G}}_t \left(\left((\psi^i_{0})^{J_0,-}\right)^{J_t,+}\right)\right>_{L^2_{g_t}}+ \left<\psi,{\mathbb{G}}_t\left(\left((\psi^i_{0})^{J_0,-}\right)^{J_t,-}\right)\right>_{L^2_{g_t}}\right] \psi^i_{0}
\end{eqnarray*}
(in the latter equality, we used the fact that ${\mathbb{G}}_t$ is self-adjoint with respect to $L^2_{g_t}$). The claim follows from the result of Kodaira--Spencer \cite{kod,kod-mor}.

Denote by $W^{k,p}(M, \Lambda_0^2(M))^{\perp}$ the space of $2$-forms in $W^{k,p}(M, \Lambda_0^2(M))$ which are orthogonal to the kernel of $P_0$ with respect to ${L^2_{g_0}}$ and consider the map
\begin{equation*}
\begin{array}{lrcl}
\Phi: & (-\epsilon, \epsilon) \times W^{k+2,p}(M, \Lambda_0^2(M))^{\perp}& \longrightarrow &  (-\epsilon, \epsilon) \times W^{k,p}(M,\Lambda_0^2(M))^{\perp} \\
    & (t,\psi) & \longmapsto & (t,(Id - \Pi_0)P_t(\psi)),
\end{array}
\end{equation*}
Clearly, the map $\Phi$ is of class $C^1$ and its differential at $(0,\psi)$ is an isomorphism so by the {\it inverse function theorem} for Banach spaces there exist a neighboorhood $V$ of $(0,\psi)$ such that $\Phi|_V$ admits an inverse of class $C^1$. By the The Kodaira--Spencer result \cite{kod,kod-mor}, the map $\Pi:  (-\epsilon,\epsilon)\times W^{k,p}(M, \Lambda_0^2(M))\rightarrow W^{k,p}(M, \Lambda_0^2(M))$ is $C^1$ and thus the map $P_t (Id-\Pi_0) G_t (Id -\Pi_0) = (Id - \Pi_t)(Id - \Pi_0)-P_t (\Pi_0 G_t)(Id - \Pi_0)$ is clearly $C^1$ since it is a composition of such operators. Then, the map
\begin{eqnarray*}
\Phi|_V^{-1}\left(t, (Id-\Pi_0)P_t (Id-\Pi_0){\mathbb{G}}_t (Id -\Pi_0)\right)&=& (t,(Id-\Pi_0){\mathbb{G}}_t(Id -\Pi_0)) \\
&=&(t,  {\mathbb{G}}_t -\Pi_0{\mathbb{G}}_t-{\mathbb{G}}_t \Pi_0 + \Pi_0{\mathbb{G}}_t\Pi_0)
\end{eqnarray*}
is $C^1$ and hence ${\mathbb{G}}_t$ is $C^1$.

\end{proof}




{\it Proof of Theorem \ref{th1}} Let $(M,\omega)$ be a $4$-dimensional compact and connected symplectic manifold and $T$ a maximal torus in $Ham(M,\omega)$.
Let $(J_t,g_t)$ a smooth family of $\omega$-compatible almost-K\"ahler metrics in $AK^T_\omega$ such that $(J_0,g_0)$ is an extremal K\"ahler metric.

Following \cite{leb-sim}, we consider the almost-K\"ahler deformations
\medskip
\begin{equation*}
\omega_{t,f}=\omega+d(J_tdf-\delta^{g_t}\psi_f^t),
\end{equation*}
where $f$ belongs to the
Fr\'echet space $\widetilde{C}^\infty_T(M,\mathbb{R})$ of $T$-invariant smooth functions (with zero
integral), which are $L^2$-orthogonal, with respect to $\frac{\omega^2}{2}$,
to $\mathfrak{t}_\omega$ and where the $2$-form $\psi_f^t$ is given by Corollary \ref{cor1}.


Let $\mathcal{U}$ be an open set in
$\mathbb{R}\times\widetilde{C}^\infty_T(M,\mathbb{R})$ containing $(0,0)$ such that the symmetric tensor $g_{t,f}(\cdot,\cdot):=\omega_{t,f}(\cdot,J_t\cdot)$ is a Riemannian metric. 

By possibly replacing $\mathcal{U}$ with a smaller open set, we may assume as in \cite{leb-sim} that the kernel of the operator $(Id-\Pi_{\omega}^T)\circ(Id-\Pi_{\omega_{t,f}}^T)$ is equal to the kernel of $(Id-\Pi_{\omega_{t,f}}^T).$ Indeed, let $\{X_1,\cdots,X_n\}$ be a basis of $\mathfrak{t}=Lie(T)$.  Then, the corresponding hamiltonians with zero mean value $\{\xi_\omega^1,\cdots,\xi_\omega^n\}$ resp. $\{\xi_{\omega_{t,f}}^1,\cdots,\xi_{\omega_{t,f}}^n\}$, with respect to $\omega$ resp. $\omega_{t,f}$,  form a basis of $\mathfrak{t}_\omega$ resp. $\mathfrak{t}_{\omega_{t,f}}.$
Let $\{{\tilde{\xi}}_\omega^1,\cdots,{\tilde{\xi}}_\omega^n\}$ resp. $\{{\tilde{\xi}}_{\omega_{t,f}}^1,\cdots,{\tilde{\xi}}_{\omega_{t,f}}^n\}$ the corresponding orthonormal basis obtained by the Gram--Schmidt procedure. Since $\det\left[\left<{\tilde{\xi}}_\omega^i,{\tilde{\xi}}_{\omega_{t,f}}^j\right>\right]$ defines a continuous function on $\mathcal{U}$, then we may suppose that $\det\left[\left<{\tilde{\xi}}_\omega^i,{\tilde{\xi}}_{\omega_{t,f}}^j\right>\right]\neq 0$ on an eventually smaller open set than $\mathcal{U}$ (here $\left<\cdot,\cdot\right>$ denotes the $L^2$ product with respect to the volume form $\frac{\omega_{t,f}^2}{2}$). So, if $u\in ker\left((Id-\Pi_{\omega}^T)\circ(Id-\Pi_{\omega_{t,f}}^T)\right)$ then $v\in\mathfrak{t}_\omega\cap\left(\mathfrak{t}_{\omega_{t,f}}\right)^{\bot_{g_{t,f}}}$, where $v=(Id-\Pi_{\omega_{t,f}}^T)u$. But the hypothesis $\det\left[\left<{\tilde{\xi}}_\omega^i,{\tilde{\xi}}_{\omega_{t,f}}^j\right>\right]\neq 0$ implies that $v\equiv0$ and then $ker\left((Id-\Pi_{\omega}^T)\circ(Id-\Pi_{\omega_{t,f}}^T)\right)=ker(Id-\Pi_{\omega_{t,f}}^T).$  

We then consider the map:

\begin{equation*}
\begin{array}{lrcl} \Psi : &\mathcal{U}& \longrightarrow &
\mathbb{R}\times\widetilde{C}^\infty_T(M,\mathbb{R}) \\
    & (t,f)& \longmapsto
    &\left(t,(Id-\Pi_{\omega}^T)\circ(Id-\Pi_{\omega_{t,f}}^T)
    (\mathring{s}^{\nabla_{t,f}})\right),
\end{array}
\end{equation*}
\medskip
where $\mathring{s}^{\nabla_{t,f}}$ is the zero integral part of the hermitian scalar curvature ${s}^{\nabla_{t,f}}$ of $(J_t,g_{t,f})$.

It follows from Proposition \ref{prop1} that $\Psi(t,f)=(t,0)$ if and only if
$(J_t,g_{t,f})$ is an extremal almost-K\"ahler metric. In
particular, $\Psi(0,0)=(0,0).$ 

Let ${{\alpha}}_{t,f}=J_td{f}-\delta^{g_t}\psi_{f}^t=J_td{f}-\delta^{g_t}{\mathbb{G}}_t\left((dJ_tdf)^{J_t,-}\right)=J_td{f}-\delta^{g_t}{\mathbb{G}}_t(D^{g_t}_{ {df}^{\sharp_{g_t}}}\omega),$ where ${\mathbb{G}}_t$ is the Green operator associated to the elliptic operator $P_t : \Omega^{J_t,-}(M) \rightarrow  \Omega^{J_t,-}(M).$
In order to extend the map $\Psi$ to Sobolev spaces, we give an explicit expression of $(Id-\Pi_{\omega_{t,f}}^T)  ({s}^{\nabla_{t,f}})$. A direct computation using (\ref{equa5}) shows that
\begin{equation}\label{equa8}
{s}^{\nabla_{t,f}}=\Delta^{g_{t,f}}F_{t,f}+g_{t,f}(\rho^{\nabla_t},\omega_{t,f}),
\end{equation}
where ${F_{t,f}}=\log\left(\frac{1}{2}\left(\left(1+g_t\left(d{{\alpha}}_{t,f},\omega\right)\right)^2+1-g_t(d{{\alpha}}_{t,f},d{{\alpha}}_{t,f})\right)\right)$ satisfying the relation $\omega_{t,f}^2=e^{F_{t,f}}\omega^2$. Then
\begin{equation} \label{equa15}
(Id-\Pi_{\omega_{t,f}}^T)  ({s}^{\nabla_{t,f}})=\Delta^{g_{t,f}}F_{t,f}+g_{t,f}(\rho^{\nabla_t},\omega_{t,f})-\sum_j\left<{s}^{\nabla_{t,f}},{\tilde{\xi}}_{\omega_{t,f}}^j\right>{\tilde{\xi}}_{\omega_{t,f}}^j.
\end{equation}
Let $\widetilde{W}_T^{p,k}$ be the completion of $\widetilde{C}^\infty_T(M,\mathbb{R})$ with respect to the Sobolev norm $\|\cdot\|_{p,k}$ involving derivatives up to order $k.$
We choose $p,k$ such that $pk>2n$ and the corresponding Sobolev space $\widetilde{W}_T^{p,k}\subset C^3_T(M,\mathbb{R})$ so that all coefficients are $C^0_T(M,\mathbb{R})$. Since $\widetilde{W}_T^{p,k}$ form an algebra relative to the standard multiplication of functions \cite{ada}, we dedudce from the expression (\ref{equa15}) that the extension of $\Psi$ to the Sobolev completion of
$\widetilde{C}^\infty_T(M,\mathbb{R})$ is a map $\Psi^{(p,k)}: \widetilde{\mathcal{U}}\subset\mathbb{R}\times\widetilde{W}_T^{p,k+4}\longrightarrow \mathbb{R}\times\widetilde{W}_T^{p,k}$.

Clearly $\Psi^{(p,k)}$ is a $C^1$ map (in a small enough open around $(0,0)$). Indeed, it is obtained by a composition of $C^1$ maps by Lemma \ref{lem5} and (\ref{equa15}).

As in \cite{leb-sim} and using Proposition \ref{prop1}, the differential of $\Psi^{(p,k)}$ at $(0,0)$ is given by
\begin{equation*}
\left(\textbf{T}_{(0,0)}\Psi^{(p,k)}\right)(t,f)=\left(t,t\delta^{g_{0}}\delta^{g_{0}}h-2\delta^{g_0}\delta^{g_0}(D^{g_0}d{f})^{J_0,-}\right),
\end{equation*}
where $h=\frac{d}{dt}|_{t=0}\,g_t$. 

The operator $L:=\frac{\partial\Psi}{\partial f}|_{(0,0)}$ given by $L({f})=-2\delta^{g_0}\delta^{g_0}(D^{g_0}d{f})^{J_0,-}$ is called
the {\it Lichnerowicz operator}. It is  a $4$-th order self-adjoint $T$-invariant elliptic linear
operator leaving invariant
$\left(\mathfrak{t}_{\omega}\right)^\bot$ since $L(f)=0$ for any $f\in\mathfrak{t}_{\omega}$. By a known result of the elliptic theory \cite{bes, wel}, we obtain the $L^2$-orthogonal splitting $\widetilde{C}^\infty_T(M,\mathbb{R})=ker(L)\oplus Im(L)$. Following the argument in \cite[Lemma 4]{apo-cal-gau-Fri 2}, any $f\in ker(L)$ gives rise to a Killing vector field in the
centralizer of $\mathfrak{t}=Lie(T).$ By the maximality of the torus $T$,
$f\in\mathfrak{t}_\omega.$ It follows that $L$ is an isomorphism of
$\widetilde{C}^\infty_T(M,\mathbb{R})$ and also from $\widetilde{W}_T^{p,k+4}$ to $\widetilde{W}_T^{p,k}$. Thus, $\textbf{T}_{(0,0)}\Psi^{(p,k)}$ is an isomorphism from $\mathbb{R}\oplus\widetilde{W}_T^{p,k+4}$ to $\mathbb{R}\oplus\widetilde{W}_T^{p,k}$. 
It follows from the {\it{inverse function theorem}} for Banach manifolds that $\Psi^{(p,k)}$ determines an isomorphism from an open neighbourhood $V$ of $(0,0)$ to an open neighbourhood of $(0,0)$. In particular, there exists $\mu>0$ such that for $|t|<\mu, \;\Psi^{(p,k)}\left(\Psi^{(p,k)}|_V^{-1}(t,0)\right)=(t,0)$. By Sobolev embedding, we can choose a $k$ large enough, such that $\widetilde{W}_T^{p,k+4}\subset\widetilde{C}^6_T(M,\mathbb{R})$. Thus, for $|t|<\mu$, $(J_t,g_{\Psi^{(p,k)}|_V^{-1}(t,0)})$ is an extremal almost-K\"ahler metric of regularity at least $C^4$ (so we ensure, in this case, that $grad_\omega s^{\nabla_{t,f}}$ is of regularity $C^1$).

By Proposition \ref{prop2}, the extremal vector field $Z_{\omega_{t,f}}^T=Z_\omega^T$ is smooth for any almost-K\"ahler metric $(J_t,g_{t,f})$. In particular, for an extremal almost-K\"ahler metric $(J_t,g_{t,f})$ of regularity $C^4$, the dual $ds^{\nabla_{t,f}}$ of $Z_\omega^T$ with respect to $\omega_{t,f}$ is of regularity $C^4$, then
the hermitian scalar curvature $s^{\nabla_{t,f}}$ of $(J_t,g_{t,f})$ is of regularity $C^5.$ From (\ref{equa8}), it follows that the hermitian scalar curvature is given by the pair of equations
\begin{eqnarray}
s^{\nabla_{t,f}}-g_{t,f}(\rho^{\nabla_t},\omega_{t,f})&=&\Delta^{g_{t,f}} (u),\label{e1}\\
e^{u}&=&\frac{\omega_{t,f}^2}{\omega^2}.\label{e2}
\end{eqnarray}

From (\ref{e1}), using the ellipticity \cite{bes} of the (Riemannian) Laplacian $\Delta^{g_{t,f}} $ and since the l.h.s of (\ref{e1})  is of H\"older class $C^{3,\beta}$ for any $\beta\in(0,1)$, it follows that $u$ is of class  $C^{5,\beta}.$
Following \cite{don2, wei}, the linearisation of the equation (\ref{e2}) $(\omega+d\alpha)\wedge d\dot{\alpha}=0$ together with the constraints $\delta^{g_t}\dot{\alpha}=0$ and $(d\dot{\alpha})^{J_t,-}=0$ form a linear elliptic system in $\dot{\alpha}$. Elliptic theory \cite{agm-dou-nir,bes} ensures that the almost-K\"ahler metric $g_{t,f}$ is of class $C^{5,\beta}$ as the volume form and we can prove that any extremal almost-K\"ahler metric of regularity $C^4$ is smooth by a bootstraping argument (in the K\"ahler case see \cite{leb-sim}).

We obtain then a smooth family of $T$-invariant extremal almost-K\"ahler structures $(J_t,\omega_t=\omega+d\alpha_t)$ defined for $|t|<\mu$. The main theorem follows from the Moser Lemma \cite{duf-sal}.

\subsection*{Acknowledgements} The author thanks V. Apostolov for his invaluable help and judicious advices and T. Dr\u{a}ghici for valuable suggestions. He is very grateful to P. Gauduchon, P. Guan and T. J. Li for useful discussion.


\end{document}